\newtheorem{theorem}{\bf Theorem}[section]
\newtheorem{proposition}[theorem]{\bf Proposition}
\newtheorem{definition}[theorem]{\bf Definition}
\newtheorem{lemma}[theorem]{\bf Lemma}
\newtheorem{problem}{\bf Problem}
\newtheorem*{remark}{\bf Remark}
\newtheorem*{maskittheorem}{\bf Maskit Combination Theorem}
\title{Problem on Mutant Pairs of Hyperbolic Polyhedra}
        \address{ %
                IUPUI Department of Mathematical Sciences\\
                LD Building, Room 255\\
                402 North Blackford Street\\
                Indianapolis, Indiana 46202-3267\\
                United States }
\thanks{$^\dag$Corresponding Author.  Email: \url{roederr@iupui.edu}}
        \address{ %
                IUPUI Department of Mathematical Sciences\\
                LD Building, Room 224Q\\
                402 North Blackford Street\\
                Indianapolis, Indiana 46202-3267\\
                United States }
\date{\today}
\begin{document}
\begin{abstract}We present a notion of mutation of hyperbolic polyhedra, analogous to mutation in knot theory \cite{CONWAY}, and then present a general question about commensurability of mutant pairs of polyhedra. We motivate that question with several concrete examples of mutant pairs for which commensurability is unknown. The polyhedra we consider are compact, so techniques involving cusps that are typically used to distinguishing mutant pairs of knots are not applicable.  Indeed, new techniques may need to be developed to study commensurability of mutant pairs of polyhedra.\end{abstract}
\maketitle
\section{Introduction}
Let $P$ be a compact polyhedron in hyperbolic 3-space $\mathbb{H}^3$. If the
dihedral angle measures are all integer submultiples of $\pi$, then reflections
in faces of $P$ induce a discrete group $\Lambda(P) \leq Isom(\mathbb{H}^3)$, $P$ serves as a fundamental domain for the action of $\Lambda(P)$, and $\mathbb{H}^3$ is tiled by images of $P$ under elements of $\Lambda(P)$; see, e.g. \cite[Theorem 6.4.3]{DAVIS}. In
this case $P$ is called a {\em Coxeter polyhedron}. There is a complete
classification of hyperbolic Coxeter polyhedra, given by Andreev's Theorem
\cite{ANDREEV,RHD} (see also \cite{HR,HOD,BS} for alternatives to the classical
proof).

The group $\Lambda(P)$ has an orientation-preserving index-2 subgroup
$\Gamma(P)$, which is therefore a Kleinian group, i.e. $\Gamma(P)$ is a discrete
subgroup of $PSL(2,\mathbb{C})$. Study of these polyhedral reflection groups
$\Gamma(P)$ is a classical topic in hyperbolic geometry \cite{VINBERG}.
They are also of considerable recent interest from several different perspectives; as a sample, we refer the reader
to \cite{AGOL,MISHA,DAVIS,GJK,LOR}.

\begin{definition}
A {\em 3-prismatic circuit} of a polyhedron $P$ is a triple of faces $(F_1, F_2, F_3)$ such that each face is adjacent to the other two, but the three faces do not all meet at a common vertex.
\end{definition}

\begin{definition}Two compact hyperbolic polyhedra $P$ and $P^\prime$ form a {\em mutant pair} iff $P^\prime$ can be obtained from $P$ by the following procedure: Start with a 3-prismatic circuit in $P$ such that the dihedral angles between any two of those faces are equal. Find the common plane $N$ that is perpendicular to all three faces. Split $P$ into two parts $P_1, P_2$ on either side of $N$, and rotate $P_2$ by $\frac {2\pi} 3$ (in whichever direction you choose). Then glue $P_1$ and the rotated $P_2$ back together to obtain $P^\prime$.
\end{definition}
In this paper we will build our polyhedra out of half-polyhedra that have a single plane of reflectional symmetry. This means that the direction of rotation is irrelevant.

As an example, consider the polyhedra depicted in Figure 1, and suppose the
vertical edges each have dihedral angle $\frac \pi 4$. The one on the right is
obtained by slicing the left polyhedron along the plane that intersects the prismatic circuit in the dashed triangle, rotating the lower half by $\frac {2\pi} 3$,
and gluing it back together.
\begin{figure}[h] \label{mutantexample}
\includegraphics[height=4cm]{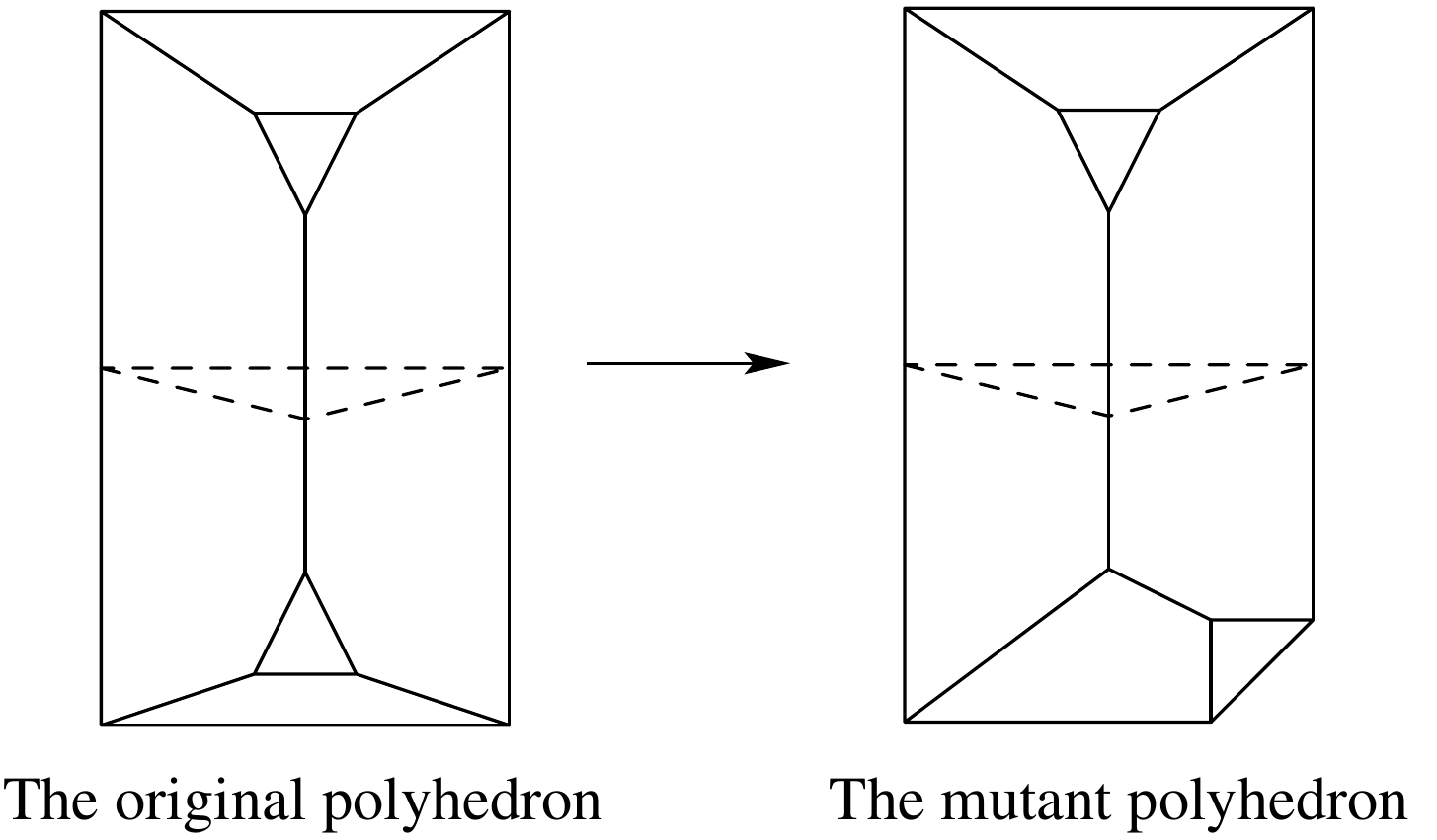}

\caption{Slicing the first polyhedron in half and rotating the lower half by $\frac {2\pi} 3$ results in the second.}
\end{figure}

\begin{definition}
Two Kleinian groups $\Gamma_1$ and $\Gamma_2$ are {\em commensurable (in the wide sense)} iff there exists an element $g \in PSL(2,\mathbb{C})$ such that $\Gamma_1$ and $g\Gamma_2g^{-1}$ have a common finite-index subgroup.

Two compact hyperbolic polyhedra $P,P^\prime$ are {\em commensurable} iff their Kleinian groups $\Gamma(P)$ and $\Gamma(P^\prime)$ are commensurable in the wide sense.
\end{definition}

Pairs of commensurable Kleinian groups share several properties, and therefore determining if a pair of Kleinian groups is commensurable is an important problem. We refer the reader to the textbook by Maclachlan and Reid and the references therein for details \cite{MRbook}. The most common invariants of commensurability are the invariant trace field (ITF) and invariant quaternion algebra (IQA); see Section~2 for more details.

For essentially the same reasons as for mutation of knots, mutation of hyperbolic polyhedra preserves the ITF and usually the IQA of the Kleinian group (this will be explained in Section~2). Therefore determining whether a mutant pair is commensurable is a difficult problem.

\begin{problem}
Under what conditions is a mutant pair of hyperbolic polyhedra commensurable?
\end{problem}

\begin{remark}
For mutant pairs of knots, the knots serve as cusps at infinity for their
complements.  These cusps lead to additional invariants, which often allow one
to determine whether the two knot complements are commensurable; see for
example \cite{CUSPS} and \cite{CD}.  
In this note we have restricted our
attention to compact polyhedra so that techniques related to cusps cannot be used
on Problem 1.
\end{remark}

In order to show that Problem 1 is interesting, we present several very simple
examples of mutant pairs of polyhedra in Sections~3 and 4. We use the ``SNAP-HEDRON''
software \cite{SNAP,SNAPHEDRON} to study these pairs of polyhedra. In certain
cases, one group is arithmetic and the other is not, or one group has integral
traces and the other does not; in both cases, the pair of groups are not commensurable.
But in most cases, the question of commensurability is unknown.
(Remark that mutant pairs BB4, BB4m and BB5, BB5m were studied in \cite[Section 7.6]{SNAP}
and the purpose of the present paper is to amplify that discussion and produce many additional examples.)

\begin{problem}
Under what conditions on a Coxeter polyhedron $P$ does its reflection group $\Gamma(P)$ have integral traces?
\end{problem}

Indeed, if one looks at Table 1, many of the CC family of polyhedra have integral traces. (Determining which Coxeter polyhedra result in arithmetic reflection groups is a very classical problem,
so we state only the variant about integral traces in Problem 2, above.)

\vspace{0.1in}
\noindent
{\bf Acknowledgments:}
We thank Patrick Morton and Daniel Ramras for helpful discussions.
This work was supported by NSF grant DMS-1348589.

\section{Mutation preserves some commensurability invariants}
\begin{definition}Let $\Gamma$ be a non-elementary finitely generated Kleinian group. Consider the subgroup $\Gamma^{(2)} = \langle \gamma^2 : \gamma \in \Gamma \rangle$.

The {\em invariant trace field (ITF)} $k\Gamma$ of $\Gamma$ is the field 
$$\mathbb{Q}\left(\text{tr } \Gamma^{(2)}\right)$$

The {\em invariant quaternion algebra (IQA)} $A\Gamma$ of $\Gamma$ is defined as
$$A\Gamma := \left\{\sum a_i \gamma_i : a_i \in \mathbb{Q}\left(\rm{tr}\: \Gamma^{(2)}\right), \gamma_i \in \Gamma\right\}.$$
\end{definition}

The following two propositions show that mutation of hyperbolic polyhedra preserves the ITF, and sometimes the IQA, of their corresponding reflection groups. The examples that we present in Sections 3 and \ref{SEC:EXAMPLES} have been constructed to preserve both.

\begin{proposition} \label{ITFsame}
If $P$ and $P^\prime$ are a mutant pair, then $k\Gamma(P) = k \Gamma(P^\prime)$.
\end{proposition}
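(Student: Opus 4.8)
The plan is to realize the mutation as an isometry of $\mathbb{H}^3$ and then carry out the polyhedral analogue of the standard argument that mutating a hyperbolic $3$-orbifold along a totally geodesic $2$-suborbifold carrying a symmetry preserves the invariant trace field. Let $N$ be the common perpendicular plane, let $R_i$ be the reflection in the plane supporting $F_i$, and set $T = P \cap N$. Since $N$ is perpendicular to each $F_i$, the triangle $T$ has its three sides on the planes of $F_1, F_2, F_3$ and meets them at right angles, and $T$ is equilateral because the dihedral angles of the circuit agree. Let $\sigma$ be the rotation by $\frac{2\pi}{3}$ about the geodesic through the centroid of $T$ perpendicular to $N$: this is exactly the isometry producing $P'$; it fixes $N$ and $T$, it cyclically permutes the planes of $F_1, F_2, F_3$ (so $\sigma R_i \sigma^{-1} = R_{i+1}$, indices mod $3$), it normalizes the Coxeter subgroup $\Delta = \langle R_1, R_2, R_3\rangle$ and its orientation-preserving part $\Delta^+$, and it satisfies $\operatorname{tr}(\sigma) = 2\cos(\pi/3) = 1 \in \mathbb{Q}$. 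In $\mathbb{H}^3/\Gamma(P)$ the plane $N$ covers a totally geodesic suborbifold $\mathcal{N}$ --- a rigid triangle orbifold with orbifold fundamental group $\Delta^+$ --- and the mutation is exactly ``cut $\mathbb{H}^3/\Gamma(P)$ along $\mathcal{N}$ and reglue using the order-$3$ isometry of $\mathcal{N}$ induced by $\sigma$''.

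Next I would split the groups along $N$. Let $\Gamma_i \leq \Gamma(P)$ be the orientation-preserving subgroup of the group generated by the reflections in the faces of $P$ on the $P_i$-side of $N$, so that $\Gamma_i$ is the holonomy of the hyperbolic orbifold with totally geodesic boundary obtained by treating the triangular cross section $T$ as a boundary face rather than a mirror; then $\Gamma_1 \cap \Gamma_2 = \Delta^+$ and $\langle \Gamma_1, \Gamma_2 \rangle = \Gamma(P)$. Since the $\Gamma(P)$-orbit of $N$ is precisely embedded ($N$ separates $P_1$ from $P_2$, and only $F_1, F_2, F_3$ cross it), the Maskit Combination Theorem identifies $\Gamma(P)$ with the amalgam $\Gamma_1 \ast_{\Delta^+} \Gamma_2$ (or an HNN extension, if $\mathcal{N}$ does not separate). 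Applying the combination theorem to $P' = P_1 \cup \sigma(P_2)$ gives the identical description for $\Gamma(P')$, with the same vertex groups $\Gamma_1, \Gamma_2$ over the same subgroup $\Delta^+$, except that the $\Gamma_2$-factor is reattached through the extra automorphism $\operatorname{conj}_\sigma$ of $\Delta^+$, which is legitimate because $\sigma$ normalizes $\Delta^+$. Since the invariant trace field is a commensurability invariant, it now suffices to show these two amalgams have the same invariant trace field.

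The heart of the proof is that re-gluing by $\operatorname{conj}_\sigma$ leaves the invariant trace field unchanged. I would proceed constructively: conjugation by a fixed element of $PSL(2,\mathbb{C})$ preserves all traces, and because $\sigma$ normalizes $\Delta^+$ one can define a representation $\rho'$ of the $P'$-amalgam that equals the holonomy of $P$ on $\Gamma_1$ and equals the $\sigma$-conjugate of that holonomy on $\Gamma_2$ --- the two prescriptions agree on the amalgamated $\Delta^+$ precisely because conjugation by $\sigma$ there undoes the twist. By the combination theorem $\rho'$ is discrete and faithful with quotient orbifold built from $P_1 \cup \sigma(P_2) = P'$, hence $\rho'$ is the holonomy of $P'$ and $\Gamma(P') \cong \langle \Gamma_1, \sigma\Gamma_2\sigma^{-1}\rangle$. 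It then remains to see that passing from $\langle \Gamma_1, \Gamma_2\rangle = \Gamma(P)$ to $\langle \Gamma_1, \sigma\Gamma_2\sigma^{-1}\rangle = \Gamma(P')$ adds no new numbers to the invariant trace field. This should follow from the rigidity of $\mathcal{N}$ --- the hyperbolic structures of the two vertex orbifolds and of their common totally geodesic boundary $\mathcal{N}$ are untouched by the mutation --- combined with the fact that $\sigma$ lies in the normalizer of the finite-covolume Fuchsian group $\Delta^+$ inside $PSL(2,\mathbb{R})$ and has rational trace, so that the regluing is controlled by data that is ``arithmetic'' relative to $\Delta^+$ and introduces no new relations among traces. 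Running the same argument with $\sigma^{-1}$ yields the reverse inclusion, giving $k\Gamma(P) = k\Gamma(P')$.

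The step I expect to be the main obstacle is the last one: that the regluing adds nothing to the trace field. It is genuinely delicate because mutation really does change individual traces --- in Gram-matrix terms, an entry recording the position of a face of $P_1$ relative to a face of $P_2$ is replaced by the position of that face relative to a $\sigma$-rotated face of $P_2$, and these generally differ; equivalently, the copies of $\sigma$ in a normal-form word for an element of $\Gamma(P')$ are interleaved with $\Gamma_1$-syllables that do not commute with $\sigma$, so they do not cancel termwise. Showing that the field is nevertheless unchanged is the whole content of the Proposition, and it is the exact polyhedral counterpart of the corresponding nontrivial fact for mutation of knots; by comparison, the combinatorial bookkeeping and the verification of the hypotheses of the combination theorem are routine.
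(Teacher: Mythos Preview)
Your setup---the cutting plane $N$, the infinite-volume half groups $\Gamma_1,\Gamma_2$, the Fuchsian edge group $\Delta^+$, and the appeal to Maskit to obtain amalgam decompositions of $\Gamma(P)$ and $\Gamma(P')$ over the same vertex groups---is exactly what the paper does. The divergence is at the step you yourself flag as ``the main obstacle'': showing that regluing by $\operatorname{conj}_\sigma$ does not enlarge the invariant trace field. Your argument for this (rigidity of $\mathcal{N}$, $\sigma$ normalizing $\Delta^+$, $\operatorname{tr}\sigma\in\mathbb{Q}$) is suggestive but is not a proof; as you note, individual traces genuinely change under mutation, and nothing you have written controls the field generated by traces of mixed words $\gamma_1(\sigma\gamma_2\sigma^{-1})\gamma_1'(\sigma\gamma_2'\sigma^{-1})\cdots$.

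The paper closes this gap not by analyzing such words but by invoking a structural result you did not use: Maclachlan--Reid \cite[Theorem 5.6.1]{MRbook}, which says that if $\Gamma\cong\Gamma_1*_H\Gamma_2$ with $H$ non-elementary, then $k\Gamma = k\Gamma_1\cdot k\Gamma_2$, the compositum. The point is that this formula depends only on the vertex groups $\Gamma_1,\Gamma_2$ and \emph{not} on the amalgamating maps $H\hookrightarrow\Gamma_i$. Since your Maskit argument already shows that both $\Gamma(P)$ and $\Gamma(P')$ are amalgams of the same $\Gamma_1,\Gamma_2$ over the same non-elementary $\Delta^+$ (possibly with different inclusions), both have invariant trace field $k\Gamma_1\cdot k\Gamma_2$, and you are done. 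Everything you wrote after obtaining the two amalgam decompositions can be replaced by this one citation.
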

To prove this we need the following two results:

\begin{theorem}[\bf McLaughlin-Reid {\cite[Theorem 5.6.1]{MRbook}} ] \label{MR561}
If $\Gamma$ is a finitely generated Kleinian group expressed as a free product with amalgamation $\Gamma_1 *_H \Gamma_2$, where H is a non-elementary Kleinian group, then $k\Gamma = k\Gamma_1 \cdot k\Gamma_2$, where $\cdot$ denotes the compositum.
\end{theorem}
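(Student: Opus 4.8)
Since $H$ is non-elementary and $H \le \Gamma_1, \Gamma_2 \le \Gamma$, all four groups are non-elementary, so their invariant trace fields are defined and are number fields; I would prove the claimed equality by establishing the two inclusions separately. The inclusion $k\Gamma_1 \cdot k\Gamma_2 \subseteq k\Gamma$ is the routine one. A subgroup relation $\Delta \le \Gamma$ forces $\Delta^{(2)} \le \Gamma^{(2)}$, because each generator $\delta^2$ of $\Delta^{(2)}$ with $\delta \in \Delta$ already lies in $\Gamma^{(2)}$. Applying this to $\Delta = \Gamma_1$ and $\Delta = \Gamma_2$ gives $\operatorname{tr}\Gamma_i^{(2)} \subseteq \operatorname{tr}\Gamma^{(2)}$ and hence $k\Gamma_i \subseteq k\Gamma$ for $i=1,2$. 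As $k\Gamma$ is a field containing both $k\Gamma_1$ and $k\Gamma_2$, it contains their compositum.

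The reverse inclusion $k\Gamma \subseteq k\Gamma_1 \cdot k\Gamma_2 =: K$ is the substantial direction. First I would reduce to single elements using the characterization $k\Gamma = \mathbb{Q}(\operatorname{tr}^2\gamma : \gamma \in \Gamma)$ (available from the general theory via $\operatorname{tr}(\gamma^2) = \operatorname{tr}^2\gamma - 2$), so that it suffices to prove $\operatorname{tr}^2\gamma \in K$ for every $\gamma \in \Gamma$. Next I would invoke the normal form for an amalgamated product, writing $\gamma$ as a reduced alternating word $a_1 b_1 a_2 b_2 \cdots$ with $a_j \in \Gamma_1$ and $b_j \in \Gamma_2$, and induct on the syllable length. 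The inductive step rests on the fundamental identity $\operatorname{tr}(XY) = \operatorname{tr}(X)\operatorname{tr}(Y) - \operatorname{tr}(XY^{-1})$, which lets one peel a syllable off the front of the word and express $\operatorname{tr}(\gamma)$ through traces of shorter words together with traces of elements of the two factors.

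The hard part will be controlling the distinction between the \emph{full} trace field and the \emph{invariant} trace field throughout this recursion. A single syllable $a_j \in \Gamma_1$ need not satisfy $\operatorname{tr}(a_j) \in k\Gamma_1$; only quantities such as $\operatorname{tr}^2 a_j$ and the symmetric products $\operatorname{tr}(a)\operatorname{tr}(b)\operatorname{tr}(ab)$ are guaranteed to lie in the invariant trace field. The latter follows by noting that the commutator $[a,b]$ lies in $\Gamma_1^{(2)}$ (since $\Gamma_1/\Gamma_1^{(2)}$ is an elementary abelian $2$-group) and applying the Fricke identity $\operatorname{tr}[a,b] = \operatorname{tr}^2 a + \operatorname{tr}^2 b + \operatorname{tr}^2(ab) - \operatorname{tr}(a)\operatorname{tr}(b)\operatorname{tr}(ab) - 2$. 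I would therefore organize the reduction so that syllables are always combined in pairs, so that only these invariant quantities — each lying in $k\Gamma_1$ or $k\Gamma_2$, hence in $K$ — ever appear, and so that the terms carrying a lone odd-weight trace cancel against one another.

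It is precisely here that the non-elementariness of $H$ should be essential: it guarantees two elements of $H$ generating a non-elementary subgroup, which I would use to normalize the representation in a position adapted to $H$ and to certify that the intermediate traces produced by the recursion genuinely remain in $K$ rather than merely in the compositum of the two \emph{full} trace fields. An alternative, more structural route would replace this bookkeeping by embedding the invariant quaternion algebras $A\Gamma_1$ and $A\Gamma_2$ into a common quaternion algebra over $K$, matched along the algebra generated by $H^{(2)}$, and reading off $k\Gamma$ from that algebra; there the main obstacle is instead establishing the compatibility of the two embeddings along $H$.
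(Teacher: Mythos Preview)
The paper does not give its own proof of this statement; it merely quotes it as Theorem~5.6.1 of Maclachlan--Reid and then invokes it, together with the Maskit combination theorem, in the proof of Proposition~\ref{ITFsame}. There is therefore no in-paper argument to compare your sketch against.

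Assessed on its own, your outline is reasonable but not yet a proof. The inclusion $k\Gamma_1\cdot k\Gamma_2 \subseteq k\Gamma$ is exactly as you say. For the reverse inclusion, the ``alternative, more structural route'' you mention at the end is in fact the cleaner path and is essentially how the cited textbook argument proceeds: non-elementariness of $H$ makes $AH$ a genuine quaternion algebra over $kH$, and since the invariant quaternion algebra of a group is already determined by any non-elementary subgroup, one obtains $A\Gamma_i \cong AH \otimes_{kH} k\Gamma_i$ for $i=1,2$; these identifications agree over $H$, so they assemble into a single quaternion algebra over $K = k\Gamma_1\cdot k\Gamma_2$ containing $\Gamma^{(2)}$, which forces $k\Gamma \subseteq K$. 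Your primary plan---inducting on syllable length in a normal form and peeling off factors via the trace identity $\operatorname{tr}(XY)=\operatorname{tr}(X)\operatorname{tr}(Y)-\operatorname{tr}(XY^{-1})$---correctly isolates the real obstacle (individual traces $\operatorname{tr}(a_j)$ need not lie in $k\Gamma_1$, only $\operatorname{tr}^2$ and the triple products do), but you never actually explain \emph{how} pairing syllables forces the odd-weight terms to cancel. That is precisely the step where such direct trace computations tend to break down, and it is the reason the quaternion-algebra formulation is preferred.
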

\begin{maskittheorem} \label{Maskit}
Let $\Gamma_1,\Gamma_2$ be two Kleinian groups with $\Gamma_1 \cap \Gamma_2 = H$, where $H$ is quasi-Fuchsian with limit set $L_H$. If $S^2 \, \backslash \, L_H$ consists of two disjoint parts $B_1 \cup B_2$, and for $i \in \{1,2\}$ only those $g_i \in \Gamma_i$ with $g_iB_i \cap B_i$ nonempty are those in H, then $\Gamma = \langle \Gamma_1 , \Gamma_2 \rangle \cong \Gamma_1 *_H \Gamma_2$, and $\Gamma$ is Kleinian.
\end{maskittheorem}
\begin{proof}[Sketch of proof of Proposition \ref{ITFsame}]
Let $P$ and $P^\prime$ be a mutant pair of polyhedra with cutting plane $N$ and
half-polyhedra $P_1$ and $P_2$. According to Theorem \ref{MR561}, it suffices to
prove that for a suitable choice of Kleinian groups $\Gamma_1,\Gamma_2,$ and
$H$, we have $\Gamma(P) \cong \Gamma_1 *_H \Gamma_2$ and also 
$\Gamma(P^\prime) \cong \Gamma_1 *_H \Gamma_2$, possibly with different
amalgamation maps.  This will follow using the Maskit combination theorem. 

Let us provide the details that $\Gamma(P) \cong \Gamma_1 *_H \Gamma_2$ because
proof for $\Gamma(P')$ is identical.  It will be helpful to suppose $P$ is
situated in the Poincar\'e ball model with the each of the faces of the
prismatic circuit perpendicular to the equatorial plane; i.e.\ the cutting
plane $N$ is the equatorial plane $z=0$.    
Let
\begin{align*}
H_+:= \{(x,y,z) \in \mathbb{H}^3 \, : \, z > 0\} \qquad \mbox{and} \qquad  H_-:= \{(x,y,z) \in \mathbb{H}^3 \, : \, z < 0\}.
\end{align*}
We also suppose $P_1$ is in the closed upper half space $\overline{H_+}$
and $P_2$ is in the closed lower half space $\overline{H_-}$.

It will be useful to consider infinite volume analogs of the half polyhedra
$P_1$ and $P_2$, so we let $P_1^0$ be the polyhedron bounded by the faces
of $P_1$ other than $N$, and define $P_2^0$ analogously.   Let $T$ be the
polyhedron bounded by the faces of the prismatic $3$-circuit $C$.

Let the groups $\Lambda_1, \Lambda_2$, and $J$ be the discrete subgroups of
${\rm Isom}(\mathbb{H}^3)$ generated by reflections in $P_1^0$, $P_2^0$, and
$T$, respectively.  They are subgroups of the reflection group $\Lambda(P)$. 

\vspace{0.1in}
\noindent
{\em Claim:} If $\lambda \in \Lambda_1$ with $\lambda(H_-) \cap H_- \neq \emptyset$ then $\lambda \in J$.  (The analogous statement
holds for $\Lambda_2$ with $H_-$ replaced by $H_+$.)

\vspace{0.1in}
\noindent
{\em Proof of Claim:}
As $\Lambda_1$ is a discrete group with fundamental domain $P_1^0$, the union of all images of $P_1^0$ under $\Lambda_1$ tile $\mathbb{H}^3$
with multiplicity one.  Moreover, the union of all images of $P_1^0 \cap H_-$ under $J$ tile $H_-$ with multiplicity one.  (This is because $T$ is a fundamental
domain for $J$ and $J$ preserves both halfspaces.)  Therefore, if $\lambda \in \Lambda_1$ satisfies $\lambda(H_-) \cap H_- \neq \emptyset$
it must be an element of $J$, since otherwise an open set of points in $\mathbb{H}^3$ would be double-tiled.
\qed (Claim)

\vspace{0.1in}

The claim immediately implies that $\Lambda_1 \cap \Lambda_2 = J$.  If we let
$B_1 \subset \mathbb{S}^2$ be the lower open hemisphere and $B_2$ be the upper
open hemisphere, the claim also proves that if $\lambda_i \in \Lambda_i$ with
$\lambda_i(B_i) \cap B_i \neq \emptyset$ then $\lambda_i \in J$.  
Since $\Gamma_1$, $\Gamma_2$, and $H$ are subgroups of $\Lambda_1$, $\Lambda_2$, and $J$ respectively, the orientation-preserving groups satisfy the conditions of the Maskit Combination Theorem. Therefore, $\langle \Gamma_1 , \Gamma_2 \rangle \cong \Gamma_1 *_H \Gamma_2$.

It remains to check that $\langle \Gamma_1, \Gamma_2 \rangle = \Gamma(P)$.  The containment  $\langle \Gamma_1, \Gamma_2 \rangle \subset \Gamma(P)$
is immediate, so we will show that $\Gamma(P) \subset \langle \Gamma_1, \Gamma_2 \rangle$.
Let $\gamma = r_1 r_2 \cdots r_{2n-1} r_{2n} \in \Gamma(P)$ with each $r_i \in \Lambda(P)$.  Let $s \in J = \Lambda_1 \cap \Lambda_2$.  Then
\begin{align*}
\gamma = r_1 r_2 r_3 r_4 \cdots r_{2n-1} r_{2n} = (r_1 s) (s r_2) (r_3 s) (s r_4) \cdots (r_{2n-1} s) (s r_{2n}) \in \langle \Gamma_1, \Gamma_2 \rangle.
\end{align*}
\end{proof}

\begin{proposition} \label{IQAsame}
If $P$ and $P^\prime$ are a mutant pair of hyperbolic polyhedra and there exists a vertex of $P$ (and hence also $P^\prime$) with two edges meeting at it having dihedral angles $\frac \pi n$ and $\frac \pi m$, with $m,n \geq 3$, then $A(P) \approx A(P^\prime)$.
\end{proposition}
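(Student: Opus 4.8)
The plan is to prove something a little stronger than stated: under the hypothesis, both invariant quaternion algebras are isomorphic to the quaternion algebra $\left(\frac{-1,-1}{k}\right)$ over the common invariant trace field $k:=k\Gamma(P)=k\Gamma(P')$, the equality of invariant trace fields being exactly Proposition~\ref{ITFsame}. Since $A\Gamma(P)$ and $A\Gamma(P')$ are four-dimensional quaternion algebras over their respective (now equal) invariant trace fields \cite{MRbook}, this identification immediately yields $A\Gamma(P)\approx A\Gamma(P')$. (This route is self-contained and does not re-use the Maskit decomposition from the proof of Proposition~\ref{ITFsame}.)

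The geometric content is that the hypothesized vertex forces a large finite subgroup into $\Gamma(P)$. Because $P$ is a Coxeter polyhedron every vertex is trivalent; writing the three dihedral angles at the chosen vertex as $\pi/n$, $\pi/m$, $\pi/p$ with $n,m\ge 3$, Andreev's Theorem \cite{ANDREEV,RHD} forces $\tfrac1n+\tfrac1m+\tfrac1p>1$, hence $p=2$ and $(n,m)\in\{(3,3),(3,4),(3,5)\}$. The orientation-preserving subgroup of the group generated by reflections in the three faces through that vertex is then a finite subgroup of $\Gamma(P)$ isomorphic to the spherical von Dyck group $D(n,m,2)$, i.e.\ to $A_4$, $S_4$, or $A_5$, and each of these contains a copy of the tetrahedral group $A_4$. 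By the parenthetical remark in the statement, $P'$ has a vertex of the same type, so $\Gamma(P')$ likewise contains a copy of $A_4$.

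Next I would compute the quaternion algebra from this $A_4$. The preimage in $SL(2,\mathbb{C})$ of a subgroup of $\Gamma(P)$ isomorphic to $A_4$ is a central extension of $A_4$ by $\{\pm I\}$ of order $24$; since $A_4$ has no faithful two-dimensional representation this extension is non-split, so the preimage is the binary tetrahedral group $2A_4$. Up to conjugacy $2A_4$ is the unit group of the Hurwitz order in $\left(\frac{-1,-1}{\mathbb{Q}}\right)$, so its $\mathbb{Q}$-linear span is all of $\left(\frac{-1,-1}{\mathbb{Q}}\right)$; in particular $2A_4$ contains an identity together with three pairwise anticommuting square roots of $-I$. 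Since $A\Gamma(P)$ is, by definition, the $k$-linear span in $M_2(\mathbb{C})$ of matrix representatives of the elements of $\Gamma(P)$, it contains $\left(\frac{-1,-1}{\mathbb{Q}}\right)$, hence contains the image of the natural map $\left(\frac{-1,-1}{\mathbb{Q}}\right)\otimes_{\mathbb{Q}}k\to A\Gamma(P)$; this map is injective because its source is a simple $k$-algebra, so its image is a copy of $\left(\frac{-1,-1}{k}\right)$. As $A\Gamma(P)$ and $\left(\frac{-1,-1}{k}\right)$ are both four-dimensional over $k$, they coincide, so $A\Gamma(P)\cong\left(\frac{-1,-1}{k}\right)$. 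Running the identical argument for $P'$ gives $A\Gamma(P')\cong\left(\frac{-1,-1}{k}\right)$, whence $A\Gamma(P)\approx A\Gamma(P')$.

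The geometry here is routine: Andreev's inequality pins down the vertex type, and vertices of $P$ persist to $P'$ under mutation since mutation only alters a neighborhood of the cutting plane. The one place requiring genuine care is algebraic: one must invoke the correct foundational statement from \cite{MRbook} that $A\Gamma(P)$ is a four-dimensional quaternion algebra over the invariant trace field (so that the final dimension count is legitimate), and one must be careful about lifts to $SL(2,\mathbb{C})$ and about the sign ambiguity of traces when identifying the relevant finite subgroup with $2A_4$. I expect this bookkeeping, rather than any conceptual difficulty, to be the main thing to get right.
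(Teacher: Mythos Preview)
Your proposal is correct and follows essentially the same route as the paper's proof: identify the vertex group as $A_4$, $S_4$, or $A_5$ (each containing $A_4$), and conclude that the invariant quaternion algebra is determined by the invariant trace field alone, hence is the same for $P$ and $P'$ by Proposition~\ref{ITFsame}. The paper outsources both steps to references---citing \cite{BMP} for the classification of vertex groups and \cite[Lemma~5.4.1]{MRbook} for the conclusion that $A\Gamma\cong\bigl(\frac{-1,-1}{k\Gamma}\bigr)$ once $A_4\subset\Gamma$---whereas you unpack them explicitly via Andreev's inequality and the Hurwitz-order description of $2A_4$; the underlying argument is identical.
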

\begin{proof}
If two edges meeting at a vertex have angles $\frac \pi n$ and $\frac \pi m$, with $m,n \geq 3$, then the local group of that vertex is either $T_{12}, O_{24},$ or $I_{60}$. See, for example, \cite{BMP}, Figure 3 in Chapter 2. Each of the groups $T_{12},O_{24}, {\rm and}\, I_{60}$ contains a subgroup isomorphic to $A_4$.  Therefore, by Lemma~5.4.1 in \cite{MRbook}, $A(P)$ is isomorphic to an algebra depending only on $k(P)$, which is isomorphic to $k(P^\prime)$ by Proposition \ref{ITFsame}.
\end{proof}

The next invariant property allows us to prove that four of the mutant pairs of polyhedra that are presented in Section 4 are not commensurable.

\begin{definition}
We say that $\Gamma$ has {\em integral traces} iff for all $\gamma \in \Gamma, \text{tr}\, (\gamma)$ is an algebraic integer.
\end{definition}

\begin{proposition} \label{inttracec} The property of having integral traces is a commensurability invariant.
\end{proposition}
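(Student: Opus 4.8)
The plan is to reduce the statement to the following purely group-theoretic fact: if $\Delta$ is a finite-index subgroup of a Kleinian group $\Gamma$, then $\Gamma$ has integral traces if and only if $\Delta$ does. Since traces are invariant under conjugation in $PSL(2,\mathbb{C})$, and since being an algebraic integer is unaffected by the sign ambiguity of traces of $PSL(2,\mathbb{C})$-elements, the definition of commensurability in the wide sense lets me replace $g\Gamma_2 g^{-1}$ by $\Gamma_2$ and assume that $\Gamma_1$ and $\Gamma_2$ share a common finite-index subgroup $\Delta$. Applying the displayed fact to $\Delta \le \Gamma_1$ and to $\Delta \le \Gamma_2$ then gives: $\Gamma_1$ has integral traces $\iff \Delta$ has integral traces $\iff \Gamma_2$ has integral traces.

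One direction of the displayed fact is immediate: every element of $\Delta$ is an element of $\Gamma$, so if $\Gamma$ has integral traces then so does $\Delta$. For the converse, suppose $\Delta$ has integral traces and let $\gamma \in \Gamma$. First I would produce an exponent $k \ge 1$ with $\gamma^k \in \Delta$: among the $[\Gamma:\Delta]+1$ right cosets $\Delta,\, \Delta\gamma,\, \dots,\, \Delta\gamma^{[\Gamma:\Delta]}$ two must coincide, and clearing the equation $\Delta\gamma^i = \Delta\gamma^j$ gives $\gamma^{k}\in\Delta$ for some $1 \le k \le [\Gamma:\Delta]$. Hence $\mathrm{tr}(\gamma^k)$ is an algebraic integer.

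The key point is then the classical identity $\mathrm{tr}(\gamma^k) = s_k\big(\mathrm{tr}(\gamma)\big)$, where $s_k \in \mathbb{Z}[X]$ is the monic degree-$k$ polynomial determined by $s_0 = 2$, $s_1 = X$, $s_{n+1} = X s_n - s_{n-1}$; equivalently, on eigenvalues $\lambda^k + \lambda^{-k} = s_k(\lambda + \lambda^{-1})$. That $s_k$ is monic with integer coefficients follows at once by induction from the recurrence. Consequently $\mathrm{tr}(\gamma)$ is a root of the monic polynomial $s_k(X) - \mathrm{tr}(\gamma^k)$, whose coefficients are algebraic integers, and therefore $\mathrm{tr}(\gamma)$ is itself an algebraic integer, since the ring of algebraic integers is integrally closed.

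I do not expect a serious obstacle: the argument is essentially elementary (and close to results in \cite{MRbook}). The only points needing care are the bookkeeping with lifts to $SL(2,\mathbb{C})$ — harmless, since $\pm\mathrm{tr}$ are simultaneously algebraic integers or not, and for a chosen lift $\tilde\gamma$ of $\gamma$ the matrix $\tilde\gamma^{\,k}$ is a lift of $\gamma^k$ — and the verification of the recurrence for $s_k$, which makes the identity $\mathrm{tr}(\gamma^k) = s_k(\mathrm{tr}(\gamma))$ hold at the level of $SL(2,\mathbb{C})$.
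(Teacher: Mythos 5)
Your proof is correct: the reduction to a common finite-index subgroup, the pigeonhole argument producing $\gamma^k\in\Delta$, and the monic integer trace polynomial $s_k$ (coming from $\mathrm{tr}(A^{n+1})=\mathrm{tr}(A)\,\mathrm{tr}(A^n)-\mathrm{tr}(A^{n-1})$ in $SL(2,\mathbb{C})$) together with integral closure give exactly the standard argument. The paper itself offers no proof beyond citing Exercise 1 in Section 5.2 of \cite{MRbook}, and what you have written is precisely the expected solution of that exercise, so there is nothing to correct.
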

\begin{proof} See Exercise 1 in Section 5.2 of \cite{MRbook}. \end{proof}

This next lemma makes it easy to check if a reflection group has integral traces.
\begin{lemma} \label{alginttrace}
If $P$ is a Coxeter polyhedron with Gram matrix G, then $\Gamma(P)$ has integral traces iff every element of $G$ is an algebraic integer.
\end{lemma}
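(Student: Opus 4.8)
The plan is to relate the entries of the Gram matrix of $P$ to the traces of reflections and products of reflections generating $\Lambda(P)$, and then pass to the index-two subgroup $\Gamma(P)$. Recall that if $P$ has faces with outward unit normals $\mathbf{e}_1,\dots,\mathbf{e}_k$ in the Minkowski model of $\mathbb{H}^3$, then the Gram matrix is $G = (\langle \mathbf{e}_i,\mathbf{e}_j\rangle)$, with diagonal entries $1$ and off-diagonal entry $G_{ij} = -\cos\theta_{ij}$ when faces $i$ and $j$ meet at dihedral angle $\theta_{ij}$, and $G_{ij} \le -1$ otherwise. The reflection $R_i$ in face $i$ acts on Minkowski space by $\mathbf{x} \mapsto \mathbf{x} - 2\langle \mathbf{x},\mathbf{e}_i\rangle \mathbf{e}_i$, and a direct computation shows $\operatorname{tr}(R_i R_j) = c + 4G_{ij}^2$ for an appropriate constant $c$ coming from the fixed subspace (in the $4\times 4$ linear model, $\operatorname{tr}(R_iR_j) = 2 + (2G_{ij}^2 - 1) \cdot 2 = 4G_{ij}^2$; more carefully one tracks the two-dimensional eigenspace spanned by $\mathbf e_i,\mathbf e_j$ and the complementary fixed plane). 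The key elementary fact is that these traces, together with $\operatorname{tr}(R_i) $, generate the same ring of integers as $\{G_{ij}^2\}$, hence the same as $\{G_{ij}\}$ up to the question of whether $G_{ij}$ itself (not just its square) is an integer.

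First I would make precise the passage between $\Lambda(P) \le \mathrm{Isom}(\mathbb{H}^3)$ acting on Minkowski space and $\Gamma(P) \le PSL(2,\mathbb{C})$: for an orientation-preserving isometry the $SO^+(3,1)$-trace and the $PSL(2,\mathbb{C})$-trace are related by $\operatorname{tr}_{SO(3,1)}(g) = |\operatorname{tr}_{SL(2,\mathbb C)}(g)|^2$ (thinking of the $SO^+(3,1)$ action as the complexification/tensor square of the $SL(2,\mathbb C)$ action on $\mathbb C^2$). So $\operatorname{tr}\gamma$ is an algebraic integer for all $\gamma \in \Gamma(P)$ if and only if all $SO^+(3,1)$-traces of elements of $\Gamma(P)$ are algebraic integers — here I would need to be slightly careful, since being an algebraic integer is not obviously preserved or reflected by the squaring map, and I would instead work with the characteristic polynomial of $g$ acting on $\mathbb{C}^4 \cong \mathbb{C}^2 \otimes \mathbb{C}^2$, whose coefficients are integral polynomials in $\operatorname{tr}_{SL_2}(g)$ and $\overline{\operatorname{tr}_{SL_2}(g)}$, together with the fact that the set of algebraic integers is a ring closed under complex conjugation.

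For the forward direction, suppose every entry of $G$ is an algebraic integer. Then every $\operatorname{tr}(R_iR_j) = 4G_{ij}^2$ is an algebraic integer, and $\operatorname{tr}(R_i)$ is an integer. Since $\Lambda(P)$ is generated by the $R_i$ and every element is a word in them, one shows by a standard trace-identity induction (using $\operatorname{tr}(AB) = \operatorname{tr}(A)\operatorname{tr}(B) - \operatorname{tr}(AB^{-1})$ in $SL_2$, lifted suitably, or the analogous Cayley–Hamilton identities in the $4\times 4$ picture) that the traces of all group elements lie in the ring generated by the $G_{ij}$ and hence are algebraic integers; restricting to $\Gamma(P) \le \Lambda(P)$ gives the claim. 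For the converse, suppose $\Gamma(P)$ has integral traces. For each pair of adjacent faces $i,j$, the product $R_iR_j$ is a rotation, hence orientation-preserving, so $R_iR_j \in \Gamma(P)$ and $\operatorname{tr}(R_iR_j) = 4G_{ij}^2$ is an algebraic integer, forcing $G_{ij}^2$ — and therefore $G_{ij} = -\cos\theta_{ij}$ — to be an algebraic integer (a rational-times-root-of-unity real number whose square is an algebraic integer is itself one). For non-adjacent faces the relevant entry is a hyperbolic analogue ($-\cosh$ of a distance or, for faces sharing an ideal point, exactly $-1$); here I would use that $R_iR_j$ is again in $\Gamma(P)$ and apply the same trace computation. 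I expect the main obstacle to be precisely this converse and the bookkeeping around the two-fold cover: making sure that for every off-diagonal entry of $G$ there is an honest orientation-preserving element of $\Gamma(P)$ whose trace is an integral polynomial expression forcing that entry to be integral, and correctly handling the conjugation-and-squaring dictionary between $PSL(2,\mathbb C)$ traces and Minkowski traces so that "algebraic integer'' transfers cleanly in both directions.
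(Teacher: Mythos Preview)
The paper does not prove this lemma at all; it simply cites page~325 of Maclachlan--Reid. So your sketch is not being compared against a proof but against a citation, and your overall strategy---compute $\operatorname{tr}(R_iR_j)$ in terms of $G_{ij}$ and run trace identities---is indeed the standard one carried out there.

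That said, there is a genuine slip that breaks your converse. You normalize the Gram matrix with diagonal entries $1$, i.e.\ $G_{ij}=\langle \mathbf e_i,\mathbf e_j\rangle$, but the paper (and Maclachlan--Reid \S10.4) uses $G_{ij}=2\langle \mathbf e_i,\mathbf e_j\rangle$. This is not cosmetic: with your convention the lemma is \emph{false}, since a dihedral angle $\pi/3$ gives $G_{ij}=-\tfrac12$, which is not an algebraic integer, while the group may perfectly well have integral traces (e.g.\ arithmetic examples). Your attempted repair, ``a rational-times-root-of-unity real number whose square is an algebraic integer is itself one,'' is incorrect ($1/2$ again), and in any case says nothing about the non-adjacent entries $-\cosh d_{ij}$. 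The fix is exactly the factor of $2$: one has $\operatorname{tr}_{SL_2}(R_iR_j)=\pm 2\langle \mathbf e_i,\mathbf e_j\rangle$, so with the paper's convention the $SL_2$-trace is literally $\pm G_{ij}$ and the converse is immediate---no squaring, no $SO(3,1)$ detour needed. For the forward direction your plan is fine in spirit, but note that the $R_i$ themselves are not in $\Gamma(P)$, so the $SL_2$ trace identity $\operatorname{tr}(AB)=\operatorname{tr}(A)\operatorname{tr}(B)-\operatorname{tr}(AB^{-1})$ cannot be applied to them directly; you must either work with the generators $R_1R_i$ of $\Gamma(P)$ or argue in the $4\times 4$ picture, which is what the cited reference does.
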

\begin{proof}
This is shown on page 325 of \cite{MRbook} (which uses $\Gamma^+(P)$ for the orientation-preserving subgroup).
\end{proof}

In certain cases, reflection groups will be arithmetic. If that holds, Theorem 8.3.2 from \cite{MRbook} implies that the group has integral traces.

\section{Simple example for which commensurability is unknown.}

We showcase Problem 1 by presenting a very simple mutant pair of polyhedra for which we do not know commensurabilty.

\begin{problem}
Are the two prisms shown in Figure \ref{FIG_SIMPLE} commensurable?
\end{problem}

\begin{figure}[h]
\includegraphics[height=5cm]{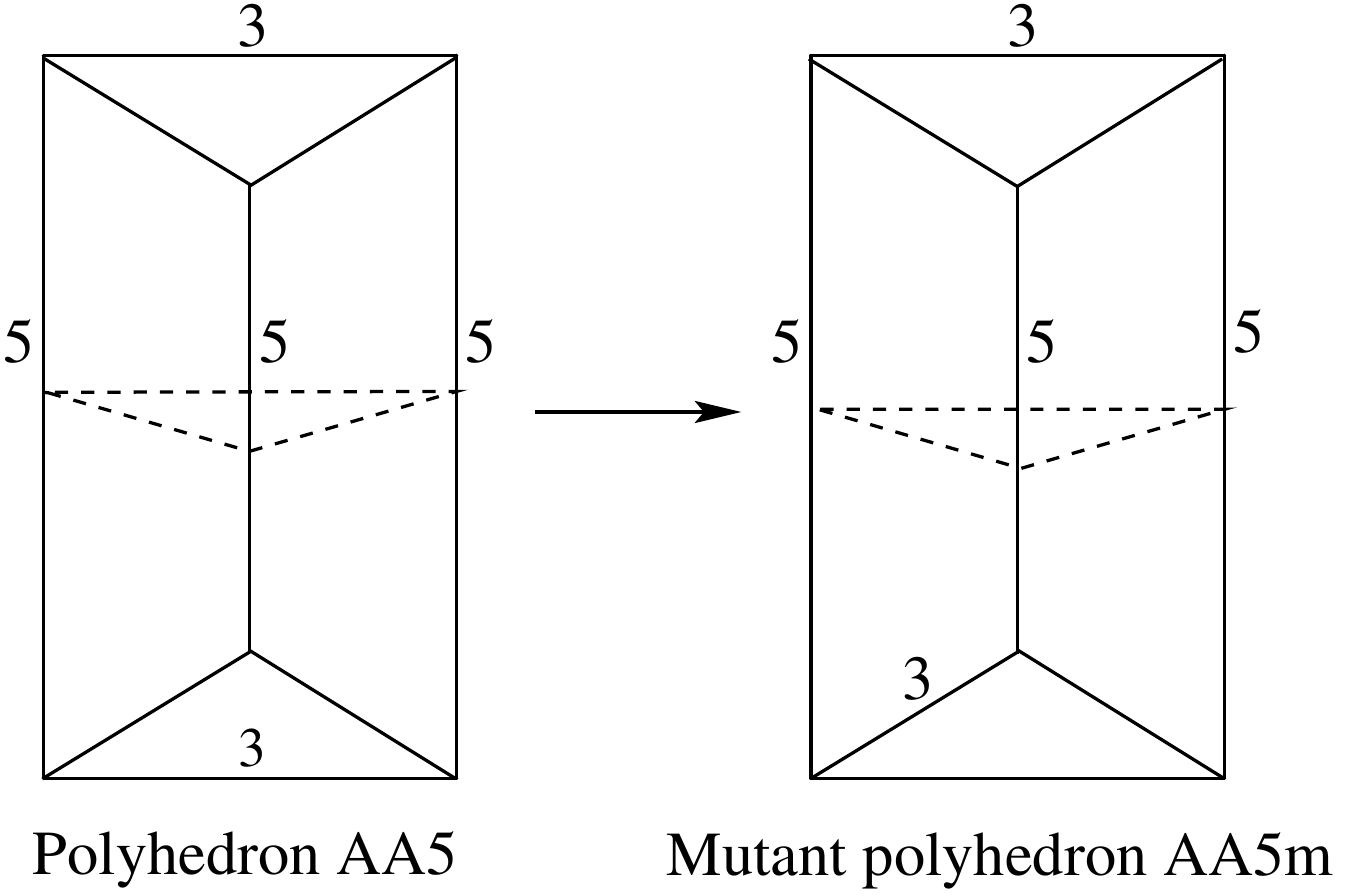}
\caption{\label{FIG_SIMPLE} A mutant pair of prisms for which we do not know if they are commensurable or not.  An edge is labeled by $n$ if it is assigned a dihedral angle of~$\frac \pi n$.  Unlabeled edges are assigned right dihedral angles.}
\end{figure}

\noindent
We will refer to the two polyhedra in Figure \ref{FIG_SIMPLE} as AA5 and AA5m, respectively, to be consistent
with the notation of Section 4.
By Proposition \ref{ITFsame}, both 
AA5 and AA5m have the same invariant trace field, which in this case is 
\begin{align}\label{EXAMPLE_ITF}
k\Gamma(P) = \mathbb{Q}\left(\frac{1}{2}-\frac{i}{2}\sqrt {-5+8\,\sqrt {5}}\right).
\end{align}
Moreover, each of the polyhedra has a vertex where two edges having
dihedral angles $\frac \pi 3$ and $\frac \pi 5$ meet.  Therefore,
Proposition \ref{IQAsame} gives that AA5 and AA5m have isomorphic invariant quaternion algebras, each of them satisfying
\begin{align*}
A\Gamma(P) \approx \left(\frac{-1,-1}{k\Gamma(P)}\right).
\end{align*}
Moreover, both have non-integral traces.  For these reasons it is difficult to
determine if the polyhedra AA5 and AA5m are commensurable.

We used the SNAP-HEDRON software\footnote{It could probably also be done by hand, like in \cite[Section 4.7.3]{MRbook}.} to the compute the outward pointing normal vectors for these polyhedra and also their Gram Matrices.
  Let us record the details here, partly to show how concrete Problem 3 is.
We list the exact outward pointing normal vectors as rows of the following matrices. Each
row represents a vector in $E^{3,1}$ with the timelike coordinate first.

$$N_{\rm AA5} = \begin{bmatrix}
0 & 0 & 0 & 1 \\

0 & 0 & -1 & 0 \\

0 & -\frac 1 4 \sqrt {10-2 \sqrt 5} & \frac {1 + \sqrt 5} 4 & 0 \\

-\frac 1 2 \sqrt {6\sqrt {5}+11} & \frac 1 4\sqrt {50+22\sqrt {5}} & \frac {1 + \sqrt {5}}4  & -\frac 1 2 \\
-\frac 1 {20}\sqrt {-130+90\sqrt {5}} & 0 & 0 & -\frac 3 4-{\frac {3\,\sqrt {5}}{20}} 
\end{bmatrix}, \quad \mbox{and}$$ 
$$N_{\rm AA5m} = \begin{bmatrix}
0 &        0 &           0 &              1 \\
0 &        0 &           -1 &              0 \\
0 & -\frac{1}{4}\,\sqrt {10-2\,\sqrt {5}} & \frac {1 + \sqrt {5}}4 & 0 \\
-\frac 1 2 \sqrt {6\sqrt {5}+11} & \frac 1 4\sqrt {50+22\sqrt {5}} & \frac {1 + \sqrt {5}}4  & -\frac 1 2 \\
-\frac{1}{10}\,\sqrt {55+30\,\sqrt {5}} & \frac{1}{10}\,\sqrt {50+10\,\sqrt {5}} & 0 & -1-\frac{1}{10}\,\sqrt {5} \\
\end{bmatrix}. \qquad \,\,$$
Notice that the only difference occurs in the last outward pointing normal
vector (last row of the matrix), which corresponds to the bottom face in Figure
\ref{FIG_SIMPLE}.

The Gram matrices\footnote{We use the convention from \cite[Section
10.4]{MRbook} so that $G_{i,j} = 2\left<{\bm v}_i,{\bm v}_j \right>$.} 
of the polyhedra are
$$G_{\rm AA5} = 
\begin{bmatrix}
2      & 0     & 0     & -1 &  \alpha    \\
0      & 2     & -\phi & -\phi      & 0 \\
0      & -\phi & 2     & -\phi     & 0 \\
-1 &  -\phi     &  -\phi    & 2      & -1     \\
\alpha   & 0 & 0 & -1      & 2
\end{bmatrix} \quad  \mbox{and} \quad
G_{\rm AA5m} =
\begin{bmatrix}
2      & 0     & 0     & -1 &  \beta    \\
0      & 2     & -\phi & -\phi      & 0 \\
0      & -\phi & 2     & -\phi     & -1 \\
-1 &  -\phi     &  -\phi    & 2      & 0     \\
\beta   & 0 & -1 & 0      & 2
\end{bmatrix}
$$
where, $\alpha = -\frac{3}{2}-\frac{3\sqrt{5}}{10}$, $\beta = -2 - \frac
{\sqrt 5} 5$, and $\phi = 2 \cos\left(\frac{\pi}{5}\right) = \frac {1+\sqrt 5}2$ is the golden ratio.
One can then use \cite[Theorem 10.4.1]{MRbook} to compute the invariant trace field (\ref{EXAMPLE_ITF}).

\section{Several Additional Mutant Pairs}\label{SEC:EXAMPLES}
Now we will present a collection of several additional simple mutant pairs of polyhedra, to demonstrate Problem~1. Of the 15 pairs presented in Table 1, four have been shown to not be commensurable; the other 11 are open. We constructed the polyhedra using the HYPER-HEDRON software \cite{HYPERHEDRON} for MATLAB, and then computed the invariants and checked for arithmeticity and/or integral traces using a PARI/GP program called SNAP-HEDRON \cite{SNAPHEDRON}.

To this end, we define three ``half-polyhedra'' A,B, and C as follows. The
numbers $n$ on the edges mean a dihedral angle of $\frac \pi n$; 
unlabeled edges are right-angled. By Andreev's Theorem, there is a
unique compact polyhedron realizing types A and B for $q \in \{4,5\}$, and type C for any $q \geq 4$. We interpret these as compact polyhedron with the dashed triangle as a face. When we form our mutant pairs, the dashed face will be where the gluing is done. Indeed, given any two of these half-polyhedra with the same choice of $q$, we can glue them together along the dashed plane to form a compact polyhedron in such a way that the faces above
and below a dashed edge ``merge'' to become a single face in the resulting
polyhedron.

\begin{figure}[h]
\includegraphics[height=4cm]{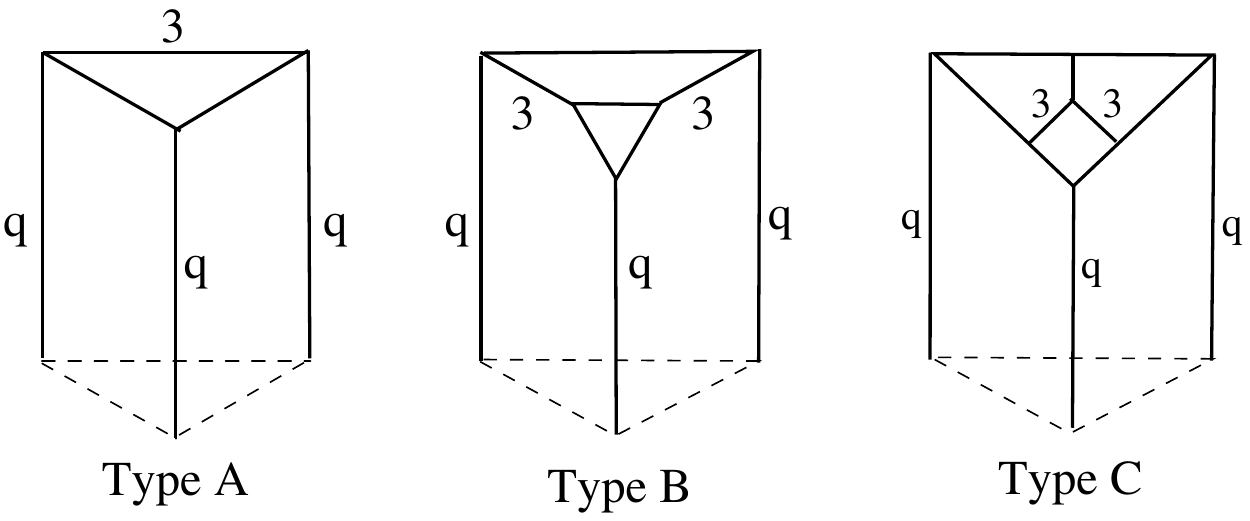}

\caption{The definitions for the three half-polyhedra. Unlabeled edges have right dihedral angles.}
\end{figure}

We use the following notation to describe these polyhedra. The first two
letters refer to the two halves that make it up (like AC or BB). Then we put a
number $q$ to denote that each of the edges of the prismatic circuit where the
mutation occurs has dihedral angle $\pi/q$. Finally, we append the letter ``m''
to refer to the mutated version. For example, Figure 1 describes BB4 (left) and
BB4m (right).

Note that all of the halves have a single plane of reflectional symmetry; the
``mutated'' polyhedra are those where the symmetry planes of the halves do not
coincide. (This means it does not matter whether we rotate clockwise or
counter-clockwise to get the mutation.) We also chose the dihedral angles so
that mutation preserves the IQA by Proposition \ref{IQAsame}.

SNAP-HEDRON data files for these polyhedra are available at: \\ \url{https://www.math.iupui.edu/~roederr/SNAP-HEDRON/mutant_pairs/}.


%
\begin{table}[!htbp]
\begin{tabular}{|l l|l|}
\hline
Polyhedra & Comments & Common ITF \& Approx. Root \\
\hline
\textbf{AA4} & Arithmetic & $x^4 - 2x^2 - 4x - 2$  \\
\textbf{AA4m} & Non-integral trace & $-0.7071067811865475 + 0.9561451575849219i$  \\
\hline
AA5 &   & $x^4 - 2x^3 - x^2 + 2x - 19$ \\
AA5m &   & $0.5000000000000000 - 1.795030906419045i$ \\
\hline
AB4 &  & $x^8 - 4x^6 + 4x^4 - 4x^2 + 1$ \\
AB4m &  & $-0.8040190354753588 + 0.5946035575013605i$ \\
\hline
AB5 &  & $x^8 - 16x^6 + 76x^4 - 256x^2 + 256$ \\
AB5m &  & $1.697635417647177 + 1.057371263440564i$ * \\
\hline
\textbf{AC4} &Integral traces & $x^8 - 4x^7 + 4x^6 - 4x^5 + 12x^4 - 4x^3 - 12x^2 + 4x + 1$  \\
\textbf{AC4m} &Non-integral trace & $1.550501494807729 + 0.1007902622880403i$ \\
\hline
AC5 & & $x^{16} + 5x^{14} + 29x^{12} + 165x^{10} + 596x^8 + 1215x^6 + 1139x^4 - 20x^2 + 1$  \\
AC5m & & $0.1386040292958391 - 0.1007017218363281i$ \\
\hline
\textbf{BB4} &Arithmetic& $x^4 - 2$  \\
\textbf{BB4m} &Non-integral trace& $0.0000000000000000 - 1.189207115002721i$ \\
\hline
BB5 && $x^4 - 20$  \\
BB5m && $0.0000000000000000 + 2.114742526881128i$  \\
\hline
BC4 & & $x^{16} - 4x^{14} - 16x^{13} - 16x^{12} + 88x^{11} + 116x^{10} - 168x^9 - 196x^8$ \\
BC4m & &\;\;   $+\, 168x^7 + 116x^6 - 88x^5 - 16x^4 + 16x^3 - 4x^2 + 1$ \\
 & & $-0.8257720711453254 - 2.505381134230215i$ \\
\hline
BC5 & & $x^{16} + 6x^{14} + 15x^{12} + 14x^{10} + 9x^8 + 14x^6 + 15x^4 + 6x^2 + 1$ \\
BC5m & & $0.5253337654545300 - 1.616811081461693i$ \\
\hline
\textbf{CC4} &Integral traces& $x^8 - 4x^7 + 4x^6 - 4x^5 + 12x^4 - 4x^3 - 12x^2 + 4x + 1$ \\
\textbf{CC4m} &Non-integral trace& $-0.5505014948077293 - 1.452983711741997i$  \\
\hline
CC5 &Integral Traces& $x^8 - 2x^6 + 4x^4 - 3x^2 + 1$ \\
CC5m &Integral Traces& $1.029085513635746 + 0.7476743906106103i$ \\
\hline
CC6 && $x^8 - 2x^7 - 4x^6 + 16x^5 - 26x^4 + 32x^3 - 16x^2 - 16x + 16$ \\
CC6m && $1.329130528042722 - 0.4831273532153546i$ \\
\hline
CC7  &Integral Traces& $x^{12} - 6x^{10} + 8x^8 - 13x^6 + 8x^4 - 6x^2 + 1$ \\
CC7m &Integral Traces& $0.8128813154618254 + 0.5824293665098390i$ \\
\hline
CC8  &Integral Traces& $x^{16} - 8x^{14} + 8x^{12} - 8x^{10} + 18x^8 - 8x^6 + 8x^4 - 8x^2 + 1$ \\
CC8m &Integral Traces& $0.6554951243769758 + 0.75519940540099274i$ \\

\hline
\end{tabular}
\vspace{0.10in}
\caption{\label{TABLE1} Data for the 15 mutant pairs. Next to each polyhedron name is listed whether or not the reflection group is arithmetic or has integral traces. All others have non-integral traces. \textbf{Boldface} names refer to non-commensurable mutant pairs that are distinguished by Proposition \ref{inttracec}. The right column shows the generator polynomial and approximate root for the invariant trace field, which is identical for both polyhedra by Proposition \ref{ITFsame}.}
\vspace{-0.35in}
\end{table}

\newpage

\bibliographystyle{plain}
\bibliography{rrefs.bib}

\vspace{0.2in}

\end{document}